\documentclass[11pt]{article}
\usepackage[T1]{fontenc}
\usepackage[utf8]{inputenc}
\usepackage{lmodern}
\usepackage{amsmath,amssymb,amsthm}
\usepackage{geometry}
\usepackage{graphicx}
\usepackage{tikz}
\usetikzlibrary{calc}
\usepackage{hyperref}
\usepackage{booktabs}
\usepackage{enumitem}
\usepackage{caption}
\usepackage{xcolor}
\usepackage{float}
\usepackage{accsupp}
\usepackage[percent]{overpic}
\title{Wheel Classes in Kontsevich Graph Complex and Merkulov's Low-Valence Conjecture}
\author{Assar Andersson}
\date{May 20, 2026}

\hypersetup{
  colorlinks=true,
  linkcolor=blue!50!black,
  citecolor=blue!50!black,
  urlcolor=blue!50!black
}

\theoremstyle{definition}
\newtheorem{definition}{Definition}[section]

\theoremstyle{plain}
\newtheorem{theorem}[definition]{Theorem}
\newtheorem*{maintheorem}{Theorem}
\newtheorem{proposition}[definition]{Proposition}
\newtheorem{lemma}[definition]{Lemma}

\newtheorem{conjecture}[definition]{Conjecture}

\newcommand{\LeftChoice}{\mathrm{left}}
\newcommand{\RightChoice}{\mathrm{right}}
\newcommand{\GC}{\mathsf{GC}}

\begin{document}
\maketitle

\begin{abstract}
We show that the wheel classes in the Kontsevich graph complex $\GC_d$ admit representatives supported on graphs with only
$3$- and $4$-valent vertices. This verifies that Merkulov’s low-valence conjecture holds for the wheel classes.
 More precisely, for every $m \ge 2$, we prove that the wheel graph $W_{2m+1}$ is homologous to an explicit linear combination of $2^{m-2}$ graphs, each having only $3$- and $4$-valent vertices.
\end{abstract}

\medskip
\noindent\textbf{Acknowledgments.} The author is grateful to Sergei Merkulov for helpful comments on an earlier version of this paper.

\section{Introduction}
A conjecture of Merkulov predicts that every homology class in the Kontsevich graph complex $\GC_d$ should admit a representative supported on graphs whose vertices all have valence $3$ or $4$.

\begin{conjecture}[Merkulov]\label{conj:merkulov}
Let $\GC_d^{\mathrm{Me}}$ be the maximal subcomplex of $\GC_d$ spanned by linear combinations of graphs all of whose vertices have valence $3$ or $4$.
Then the inclusion
\[
i \colon \GC_d^{\mathrm{Me}}\hookrightarrow \GC_d
\]
is a quasi-isomorphism.
\end{conjecture}

This conjecture has been checked by Brun and Willwacher up to genus $10$ in all cohomological degrees \cite{BrunWillwacher2024}.

In this paper we show that the homology classes in the even graph complex represented by the wheel graphs
\[
W_{2m+1} =
\begin{tikzpicture}[x=0.78cm,y=0.78cm,baseline=(current bounding box.center)]
  \tikzset{graphvertex/.style={circle,fill=black,draw=black,inner sep=0pt,minimum size=2.2pt}}
  \tikzset{graphedge/.style={draw=black,line width=0.4pt}}
  \tikzset{edgelabel/.style={font=\scriptsize,inner sep=1pt,fill=white,text=black,execute at begin node=\BeginAccSupp{ActualText={}},execute at end node=\EndAccSupp{}}}

  \node[graphvertex] (c) at (0,0) {};
  \coordinate (t) at (-115:2.05);
  \node[graphvertex] (v1) at (35:2.05) {};
  \node[graphvertex] (v2) at (-15:2.05) {};
  \node[graphvertex] (v3) at (-65:2.05) {};
  \node[graphvertex] (v4) at (-165:2.05) {};
  \node[graphvertex] (v5) at (145:2.05) {};
  \node[graphvertex] (v6) at (95:2.05) {};

  \draw[dotted, line width=0.5pt] (c) -- (t);
  \draw[graphedge] (c) -- (v1);
  \draw[graphedge] (c) -- (v2);
  \draw[graphedge] (c) -- (v3);
  \draw[graphedge] (c) -- (v4);
  \draw[graphedge] (c) -- (v5);
  \draw[graphedge] (c) -- (v6);

  \draw[graphedge] (v6) -- (v1);
  \node[edgelabel] at ($(v6)!0.5!(v1)+(0.02,0.14)$) {1};
  \draw[graphedge] (v1) -- (v2);
  \node[edgelabel] at ($(v1)!0.5!(v2)+(0.12,0.05)$) {2};
  \draw[graphedge] (v2) -- (v3);
  \node[edgelabel] at ($(v2)!0.5!(v3)+(0.10,-0.08)$) {3};
  \draw[dotted, line width=0.5pt] (v3) -- (t);
  \draw[dotted, line width=0.5pt] (t) -- (v4);
  \draw[graphedge] (v4) -- (v5);
  \node[edgelabel] at ($(v4)!0.5!(v5)+(-0.12,-0.06)$) {\(2m\)};
  \draw[graphedge] (v5) -- (v6);
  \node[edgelabel] at ($(v5)!0.5!(v6)+(-0.08,0.10)$) {\(2m+1\)};
\end{tikzpicture}
\]
are consistent with Merkulov’s conjecture.

These classes occupy a distinguished place in $\GC_d$ for $d$ even. By Willwacher's identification of the degree-zero cohomology of 
$\GC_2$ with the Grothendieck--Teichm\"uller Lie algebra \cite{Willwacher2015}, 
they correspond to the conjectural generators of $\mathfrak{grt}_1$; see also \cite{MerkulovSurvey}.

The wheel graphs $W_{2m+1}$ are obviously not contained in $\GC_d^{\mathrm{Me}}$ for $m>1$, 
since they contain one $2m+1$-valent hub vertex. 
Our main theorem shows that the homology classes associated with the wheel graphs nevertheless admit representatives in $\GC_d^{\mathrm{Me}}$.

\begin{maintheorem}
For $d$ even there exists an element $U_{2m+1} \in \GC_d$ such that
\[
W_{2m+1} -d(U_{2m+1})\in \GC_d^{\mathrm{Me}}.
\]
\end{maintheorem}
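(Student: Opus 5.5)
The strategy is to split the high-valent hub of $W_{2m+1}$ step by step. Each step adds one vertex and one edge, and the spokes are redistributed so that no vertex of valence $>4$ is ever created except along a controlled ``spine''. Recall that for $d$ even the differential $d$ on $\GC_d$ splits vertices: $d\Gamma=\sum_v \tfrac12\sum(\text{split } v \text{ into two vertices joined by a new edge})$, where each new vertex must remain at least bivalent (or trivalent, depending on conventions). Splitting the hub of a wheel-like graph therefore replaces it by two adjacent vertices that share its spokes in all possible ways.

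\textbf{Step 1: a chain of hub vertices.} Define $H_k$ as the graph obtained from $W_{2m+1}$ by replacing the hub with a path of $k$ vertices $c_1,\dots,c_k$ (joined by $k-1$ new edges). Each rim vertex is attached to some $c_i$, consecutive rim vertices being attached to the same or consecutive $c_i$, in a ``fan'' pattern. We sum $H_k$ over such patterns with signs fixed by edge orderings. The aim is an identity of the form
\[
W_{2m+1} - d(U^{(1)}) = \sum(\text{graphs with hub split into two vertices of valence} \le \text{roughly half}) ,
\]
obtained as follows. Take $U^{(1)}$ to be a graph with one fewer edge, for instance $W_{2m+1}$ with a rim edge contracted, or with a spoke replaced suitably. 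Then $d(U^{(1)})$ contains $W_{2m+1}$, and its remaining terms are graphs in which the hub has been split. By choosing $U^{(1)}$ as the graph in which one rim vertex is \emph{merged} into the hub (a $2m$-valent hub with a double connection), the split terms that are unbalanced cancel pairwise by the $\mathbb Z/(2m+1)$ rotational symmetry. This symmetry is where the odd number of spokes and the even $d$ enter: the orientation sign of the rotation is $+1$.

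\textbf{Step 2: induction on the maximal valence.} Iterating this, at each stage every vertex of valence $>4$ is replaced, modulo boundaries, by a pair of vertices that each carry half of its spokes plus the connecting edge. A vertex of valence $5$ or $6$ splits into $3+4$ or $4+4$ pieces. Since each non-Merkulov vertex can be split ``left'' or ``right'' (the macros $\LeftChoice$, $\RightChoice$ suggest exactly this binary choice), after $m-2$ rounds we obtain a sum of $2^{m-2}$ graphs. This matches the count in the abstract. Concretely, I would index the final graphs by words in $\{\LeftChoice,\RightChoice\}^{m-2}$, each describing a ``caterpillar'' replacing the hub in which spine vertices are $3$- or $4$-valent. $U_{2m+1}$ is then the sum of the intermediate graphs used.

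\textbf{Main obstacle.} The hard part is the sign and cancellation bookkeeping. At each splitting step, $d$ of the chosen auxiliary graph produces not only the desired balanced splittings but also splittings at the rim vertices and unbalanced hub splittings. One must show that all of these either vanish (by odd symmetries, e.g.\ a graph with an automorphism reversing orientation is zero) or cancel between the $\LeftChoice$ and $\RightChoice$ branches. I would first verify the $m=2,3$ cases ($W_5$, $W_7$) by explicit computation, to pin down the correct auxiliary graphs and signs. I would then formulate a general lemma computing $d$ of a ``partially split wheel'' with one remaining high-valent vertex, and prove by induction that the unwanted terms telescope.
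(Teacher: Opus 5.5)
\textbf{Gap: wrong differential, under which the statement fails.} You take $d$ to be vertex splitting, but the paper's $d$ is edge contraction (Conventions section). With vertex splitting, the statement you are trying to prove is false for $m\ge 2$.

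Indeed, $W_{2m+1}$ can appear in the vertex splitting of a graph $\Gamma$ only if $\Gamma\cong W_{2m+1}/e$ for some edge $e$. Contracting a spoke $hr$ makes the merged vertex adjacent to each rim neighbour of $r$ twice, once via a spoke and once via a rim edge. Contracting a rim edge $rr'$ makes the spokes $hr$ and $hr'$ parallel. So every $W_{2m+1}/e$ has a double edge and vanishes for $d$ even. Hence the coefficient of $W_{2m+1}$ in $d(U)$ is zero for every $U$, and $W_{2m+1}-d(U)$ always retains the $(2m+1)$-valent wheel.

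Your claim that ``$d(U^{(1)})$ contains $W_{2m+1}$'' therefore cannot hold. Your explicit candidate, a rim vertex merged into the hub, is exactly such a $W_{2m+1}/e$, i.e.\ zero. The same double-edge observation is what makes $W_{2m+1}$ a cycle in the paper's edge-contraction complex.

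\textbf{What is missing even with the right convention.} $U$ must have one more vertex and one more edge than $W_{2m+1}$, i.e.\ it is a splitting of the hub. The whole content of the proof then lies in what you defer as the ``main obstacle''.

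The paper supplies explicit graphs $U_N(S)$ and $V_N(S)$ indexed by words in $\{\LeftChoice,\RightChoice\}$. The hub is not halved: it sheds two rim neighbours per step, through a new triangle attached on the left or on the right. In $U_N(S)$ only the following edges can be contracted without creating a double edge: the newest spine edge, the first and last arc edges, and some older spine edges $4i+1$. The first three give $-V_N(S)$, $V_N(S,\LeftChoice)$ and $V_N(S,\RightChoice)$. The older spine contractions vanish when $s_{i-1}=s_i$, and otherwise cancel between words differing by swapping the adjacent pair $s_{i-1},s_i$.

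This gives the telescoping identity $d\bigl(\sum_{S\in\mathcal S_k}U_N(S)\bigr)=\sum_{T\in\mathcal S_{k+1}}V_N(T)-\sum_{S\in\mathcal S_k}V_N(S)$. Summing over $0\le k\le m-2$, and using $V_N(\emptyset)=W_N$ and the left/right symmetry, yields the result. The $\mathbb Z/(2m+1)$ rotational symmetry you invoke plays no role. Without explicit auxiliary graphs and this cancellation argument, the proposal remains a plan rather than a proof.
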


An explicit version, with a formula for $U_{2m+1}$, is stated and proved later as Theorem~\ref{thm:explicit-representatives}.

\section{Conventions}
We shall assume familiarity with Kontsevich's graph complex in this paper.
To fix conventions, we will consider $\GC_d$ to consist of connected graphs in which every vertex is at least $3$-valent.

We will use vertex and edge labels starting from $0$.
We will exclusively consider the case that $d$ is even. In this case, vertex labels and edge directions do not matter, and edge labels matter only through the sign orientation.

The differential $d$ is the edge-contraction differential. If a graph has $m$ edges, then the contraction of the edge labelled $i$ carries the sign
\[
(-1)^{i+m}.
\]
After contracting the edge labelled $i$, we relabel the remaining edges by preserving their relative order: labels smaller than $i$ remain unchanged, while each label greater than $i$ is decreased by $1$.

\section{Two Families of Graphs}
The proof is organized around two explicit families of graphs.
The family $V_N(S)$ describes the low-valence support that remains after eliminating the higher-valence part of the wheel class, while the family $U_N(S)$ provides the auxiliary chains whose differentials perform this elimination.
The construction of $V_N(S)$ and $U_N(S)$ may be summarized as follows.

\medskip
\noindent
\textbf{Construction of $V_N(S)$ and $U_N(S)$.}
\begin{enumerate}[itemsep=0.35em, topsep=0.35em]
\item Input: an odd integer $N$ and a binary sequence $S=(s_1,\dots,s_k)$ of choices, with $s_i \in \{\LeftChoice, \RightChoice\}$. 

\item \label{it:seed} Initialize the graph with edges
\[
(0,1),\ (0,2),\ (0,3),\ (1,2),\ (2,3). 
\]
Denote vertex $1$ by L, vertex $2$ by C and vertex $3$ by R.

\item \label{it:iterate} For $i=1,\dots,k$:
\begin{enumerate}[label=(\alph*), itemsep=0.25em, topsep=0.25em]
\item \label{it:spine} Add a new vertex attached to the current centre vertex $C$, and denote the new vertex by $\texttt{new\_C}$. The new edge $(\texttt{C},\texttt{new\_C})$ gets the label $4i+1$. 

\item \label{it:triangle1} Add an edge $(\texttt{new\_C},R)$ (if $s_i=\LeftChoice$, we instead add the edge $(\texttt{new\_C},L)$). This edge gets the label $4i+2$.
\item \label{it:triangle2} Add a second new vertex denoted $\texttt{new\_R}$ (or $\texttt{new\_L}$). Add an edge $(\texttt{R},\texttt{new\_R})$ (or $(\texttt{L},\texttt{new\_L})$). This new edge gets the label $4i+3$.
\item \label{it:triangle3} Add an edge $(\texttt{new\_C},\texttt{new\_R})$ (or $(\texttt{new\_C},\texttt{new\_L})$). This edge gets the label $4i+4$.
\item Denote the vertices $\texttt{new\_C}$ and $\texttt{new\_R}$ (or $\texttt{new\_L}$) by $C$ and $R$ (or $L$) respectively. 

\end{enumerate}

After these $k$ steps, the graph has $2k+4$ vertices and $4k+5$ edges.  

\item \label{it:U}(Only for $U_N(S)$) 
Add one further centre vertex $\texttt{new\_C}$ and an edge $(\texttt{C},\texttt{new\_C})$. This edge gets the label $4k+5$.
Denote the new vertex $C$.

\item \label{it:arc} 
Now, for both $V_N(S)$ and $U_N(S)$, let $m$ denote the number of remaining vertices to be added; explicitly, $m=N-2k-3$. Let $n$ denote the current largest vertex label.
If $m = 0$, add the final edge $(L,R)$, with edge label $2N-1$ (this case will only be considered for $V_N(S)$). 
Otherwise, add the remaining vertices and edges as an arc of triangles as follows: 
Let us denote the least unassigned edge label $e = 4k+5$ for $V_N(S)$ and $e = 4k+6$ for $U_N(S)$.

\begin{enumerate}
\item \label{it:left_arc} Add $(L, n+1)$ with edge label $e$, and add $(n+1,C)$ with edge label $e+1$.
\item \label{it:mid_arc} For $i = 1, \ldots, m-1$: Add $(n+i, n+i+1)$ with edge label $e+2i$, and add $(n+i+1,C)$ with edge label $e+2i+1$.
\item \label{it:right_arc} Add $(n+m, R)$ with edge label $2N-1$ for $V_N(S)$ and $2N$ for $U_N(S)$. 

\end{enumerate}
\end{enumerate}
\medskip

The construction is illustrated in the following two examples, shown in Figures~\ref{fig:V11-construction} and~\ref{fig:V15-construction}.

\begin{figure}[H]
\centering
\begin{minipage}{0.24\linewidth}
\centering
\resizebox{\linewidth}{!}{\begin{tikzpicture}[x=0.8cm,y=0.8cm,baseline=(current bounding box.center)]
  \tikzset{graphvertex/.style={circle,fill=black,draw=black,inner sep=0pt,minimum size=2.2pt}}
  \tikzset{graphedge/.style={draw=black,line width=0.4pt}}
  \tikzset{edgelabel/.style={font=\scriptsize,inner sep=1pt,fill=white,text=black,execute at begin node=\BeginAccSupp{ActualText={}},execute at end node=\EndAccSupp{}}}
  \node[graphvertex] (v0) at (0.000,2.200) {};
  \node[graphvertex] (v1) at (-3.000,0.000) {};
  \node[graphvertex] (v2) at (0.000,0.000) {};
  \node[graphvertex] (v3) at (3.000,0.000) {};
  \draw[graphedge] (v0) -- (v1);
  \node[edgelabel] at (-1.405,0.971) {0};
  \draw[graphedge] (v0) -- (v2);
  \node[edgelabel] at (0.160,1.100) {1};
  \draw[graphedge] (v0) -- (v3);
  \node[edgelabel] at (1.595,1.229) {2};
  \draw[graphedge] (v1) -- (v2);
  \node[edgelabel] at (-1.500,0.160) {3};
  \draw[graphedge] (v2) -- (v3);
  \node[edgelabel] at (1.500,0.160) {4};
\end{tikzpicture}}
\caption*{Seed}
\end{minipage}\hfill
\begin{minipage}{0.24\linewidth}
\centering
\resizebox{\linewidth}{!}{\begin{tikzpicture}[x=0.8cm,y=0.8cm,baseline=(current bounding box.center)]
  \tikzset{graphvertex/.style={circle,fill=black,draw=black,inner sep=0pt,minimum size=2.2pt}}
  \tikzset{graphedge/.style={draw=black,line width=0.4pt}}
  \tikzset{edgelabel/.style={font=\scriptsize,inner sep=1pt,fill=white,text=black,execute at begin node=\BeginAccSupp{ActualText={}},execute at end node=\EndAccSupp{}}}
  \node[graphvertex] (v0) at (0.000,2.200) {};
  \node[graphvertex] (v1) at (-3.000,0.000) {};
  \node[graphvertex] (v2) at (0.000,0.000) {};
  \node[graphvertex] (v3) at (3.000,0.000) {};
  \node[graphvertex] (v4) at (0.000,-2.200) {};
  \node[graphvertex] (v5) at (-3.000,-2.200) {};
  \draw[graphedge] (v0) -- (v1);
  \node[edgelabel] at (-1.405,0.971) {0};
  \draw[graphedge] (v0) -- (v2);
  \node[edgelabel] at (0.160,1.100) {1};
  \draw[graphedge] (v0) -- (v3);
  \node[edgelabel] at (1.595,1.229) {2};
  \draw[graphedge] (v1) -- (v2);
  \node[edgelabel] at (-1.500,0.160) {3};
  \draw[graphedge] (v2) -- (v3);
  \node[edgelabel] at (1.500,0.160) {4};
  \draw[graphedge] (v2) -- (v4);
  \node[edgelabel] at (0.160,-1.100) {5};
  \draw[graphedge] (v4) -- (v1);
  \node[edgelabel] at (-1.595,-1.229) {6};
  \draw[graphedge] (v1) -- (v5);
  \node[edgelabel] at (-2.840,-1.100) {7};
  \draw[graphedge] (v4) -- (v5);
  \node[edgelabel] at (-1.500,-2.360) {8};
\end{tikzpicture}}
\caption*{After $s_1=\LeftChoice$}
\end{minipage}\hfill
\begin{minipage}{0.24\linewidth}
\centering
\resizebox{\linewidth}{!}{\begin{tikzpicture}[x=0.8cm,y=0.8cm,baseline=(current bounding box.center)]
  \tikzset{graphvertex/.style={circle,fill=black,draw=black,inner sep=0pt,minimum size=2.2pt}}
  \tikzset{graphedge/.style={draw=black,line width=0.4pt}}
  \tikzset{edgelabel/.style={font=\scriptsize,inner sep=1pt,fill=white,text=black,execute at begin node=\BeginAccSupp{ActualText={}},execute at end node=\EndAccSupp{}}}
  \node[graphvertex] (v0) at (0.000,2.200) {};
  \node[graphvertex] (v1) at (-3.000,0.000) {};
  \node[graphvertex] (v2) at (0.000,0.000) {};
  \node[graphvertex] (v3) at (3.000,0.000) {};
  \node[graphvertex] (v4) at (0.000,-2.200) {};
  \node[graphvertex] (v5) at (-3.000,-2.200) {};
  \node[graphvertex] (v6) at (0.000,-4.400) {};
  \node[graphvertex] (v7) at (3.000,-4.400) {};
  \draw[graphedge] (v0) -- (v1);
  \node[edgelabel] at (-1.405,0.971) {0};
  \draw[graphedge] (v0) -- (v2);
  \node[edgelabel] at (0.160,1.100) {1};
  \draw[graphedge] (v0) -- (v3);
  \node[edgelabel] at (1.595,1.229) {2};
  \draw[graphedge] (v1) -- (v2);
  \node[edgelabel] at (-1.500,0.160) {3};
  \draw[graphedge] (v2) -- (v3);
  \node[edgelabel] at (1.500,0.160) {4};
  \draw[graphedge] (v2) -- (v4);
  \node[edgelabel] at (0.160,-1.100) {5};
  \draw[graphedge] (v4) -- (v1);
  \node[edgelabel] at (-1.595,-1.229) {6};
  \draw[graphedge] (v1) -- (v5);
  \node[edgelabel] at (-2.840,-1.100) {7};
  \draw[graphedge] (v4) -- (v5);
  \node[edgelabel] at (-1.500,-2.360) {8};
  \draw[graphedge] (v4) -- (v6);
  \node[edgelabel] at (0.160,-3.300) {9};
  \draw[graphedge] (v6) -- (v3);
  \node[edgelabel] at (1.368,-2.110) {10};
  \draw[graphedge] (v3) -- (v7);
  \node[edgelabel] at (3.160,-2.200) {11};
  \draw[graphedge] (v6) -- (v7);
  \node[edgelabel] at (1.500,-4.240) {12};
\end{tikzpicture}}
\caption*{After $s_2=\RightChoice$}
\end{minipage}\hfill
\begin{minipage}{0.24\linewidth}
\centering
\resizebox{\linewidth}{!}{\begin{tikzpicture}[x=0.8cm,y=0.8cm,baseline=(current bounding box.center)]
  \tikzset{graphvertex/.style={circle,fill=black,draw=black,inner sep=0pt,minimum size=2.2pt}}
  \tikzset{graphedge/.style={draw=black,line width=0.4pt}}
  \tikzset{edgelabel/.style={font=\scriptsize,inner sep=1pt,fill=white,text=black,execute at begin node=\BeginAccSupp{ActualText={}},execute at end node=\EndAccSupp{}}}
  \node[graphvertex] (v0) at (0.000,2.200) {};
  \node[graphvertex] (v1) at (-3.000,0.000) {};
  \node[graphvertex] (v2) at (0.000,0.000) {};
  \node[graphvertex] (v3) at (3.000,0.000) {};
  \node[graphvertex] (v4) at (0.000,-2.200) {};
  \node[graphvertex] (v5) at (-3.000,-2.200) {};
  \node[graphvertex] (v6) at (0.000,-4.400) {};
  \node[graphvertex] (v7) at (3.000,-4.400) {};
  \node[graphvertex] (v8) at (-3.074,-4.173) {};
  \node[graphvertex] (v9) at (-1.973,-5.813) {};
  \node[graphvertex] (v10) at (-0.119,-6.493) {};
  \node[graphvertex] (v11) at (1.780,-5.953) {};
  \draw[graphedge] (v0) -- (v1);
  \node[edgelabel] at (-1.405,0.971) {0};
  \draw[graphedge] (v0) -- (v2);
  \node[edgelabel] at (0.160,1.100) {1};
  \draw[graphedge] (v0) -- (v3);
  \node[edgelabel] at (1.595,1.229) {2};
  \draw[graphedge] (v1) -- (v2);
  \node[edgelabel] at (-1.500,0.160) {3};
  \draw[graphedge] (v2) -- (v3);
  \node[edgelabel] at (1.500,0.160) {4};
  \draw[graphedge] (v2) -- (v4);
  \node[edgelabel] at (0.160,-1.100) {5};
  \draw[graphedge] (v4) -- (v1);
  \node[edgelabel] at (-1.595,-1.229) {6};
  \draw[graphedge] (v1) -- (v5);
  \node[edgelabel] at (-2.840,-1.100) {7};
  \draw[graphedge] (v4) -- (v5);
  \node[edgelabel] at (-1.500,-2.360) {8};
  \draw[graphedge] (v4) -- (v6);
  \node[edgelabel] at (0.160,-3.300) {9};
  \draw[graphedge] (v6) -- (v3);
  \node[edgelabel] at (1.368,-2.110) {10};
  \draw[graphedge] (v3) -- (v7);
  \node[edgelabel] at (3.160,-2.200) {11};
  \draw[graphedge] (v6) -- (v7);
  \node[edgelabel] at (1.500,-4.240) {12};
  \draw[graphedge] (v5) -- (v8);
  \node[edgelabel] at (-2.877,-3.193) {13};
  \draw[graphedge] (v6) -- (v8);
  \node[edgelabel] at (-1.549,-4.446) {14};
  \draw[graphedge] (v8) -- (v9);
  \node[edgelabel] at (-2.391,-4.904) {15};
  \draw[graphedge] (v6) -- (v9);
  \node[edgelabel] at (-0.893,-5.237) {16};
  \draw[graphedge] (v9) -- (v10);
  \node[edgelabel] at (-0.991,-6.003) {17};
  \draw[graphedge] (v6) -- (v10);
  \node[edgelabel] at (0.100,-5.456) {18};
  \draw[graphedge] (v10) -- (v11);
  \node[edgelabel] at (0.787,-6.069) {19};
  \draw[graphedge] (v6) -- (v11);
  \node[edgelabel] at (0.995,-5.056) {20};
  \draw[graphedge] (v11) -- (v7);
  \node[edgelabel] at (2.264,-5.078) {21};
\end{tikzpicture}}
\caption*{Close the arc}
\end{minipage}
\caption{Step-by-step construction of $V_{11}(\LeftChoice,\RightChoice)$.}
\label{fig:V11-construction}
\end{figure}

\begin{figure}[H]
\centering
\begin{minipage}{0.19\linewidth}
\centering
\resizebox{\linewidth}{!}{\begin{tikzpicture}[x=0.8cm,y=0.8cm,baseline=(current bounding box.center)]
  \tikzset{graphvertex/.style={circle,fill=black,draw=black,inner sep=0pt,minimum size=2.2pt}}
  \tikzset{graphedge/.style={draw=black,line width=0.4pt}}
  \tikzset{edgelabel/.style={font=\scriptsize,inner sep=1pt,fill=white,text=black,execute at begin node=\BeginAccSupp{ActualText={}},execute at end node=\EndAccSupp{}}}
  \node[graphvertex] (v0) at (0.000,2.200) {};
  \node[graphvertex] (v1) at (-3.000,0.000) {};
  \node[graphvertex] (v2) at (0.000,0.000) {};
  \node[graphvertex] (v3) at (3.000,0.000) {};
  \draw[graphedge] (v0) -- (v1);
  \node[edgelabel] at (-1.405,0.971) {0};
  \draw[graphedge] (v0) -- (v2);
  \node[edgelabel] at (0.160,1.100) {1};
  \draw[graphedge] (v0) -- (v3);
  \node[edgelabel] at (1.595,1.229) {2};
  \draw[graphedge] (v1) -- (v2);
  \node[edgelabel] at (-1.500,0.160) {3};
  \draw[graphedge] (v2) -- (v3);
  \node[edgelabel] at (1.500,0.160) {4};
\end{tikzpicture}}
\caption*{Seed}
\end{minipage}\hfill
\begin{minipage}{0.19\linewidth}
\centering
\resizebox{\linewidth}{!}{\begin{tikzpicture}[x=0.8cm,y=0.8cm,baseline=(current bounding box.center)]
  \tikzset{graphvertex/.style={circle,fill=black,draw=black,inner sep=0pt,minimum size=2.2pt}}
  \tikzset{graphedge/.style={draw=black,line width=0.4pt}}
  \tikzset{edgelabel/.style={font=\scriptsize,inner sep=1pt,fill=white,text=black,execute at begin node=\BeginAccSupp{ActualText={}},execute at end node=\EndAccSupp{}}}
  \node[graphvertex] (v0) at (0.000,2.200) {};
  \node[graphvertex] (v1) at (-3.000,0.000) {};
  \node[graphvertex] (v2) at (0.000,0.000) {};
  \node[graphvertex] (v3) at (3.000,0.000) {};
  \node[graphvertex] (v4) at (0.000,-2.200) {};
  \node[graphvertex] (v5) at (-3.000,-2.200) {};
  \draw[graphedge] (v0) -- (v1);
  \node[edgelabel] at (-1.405,0.971) {0};
  \draw[graphedge] (v0) -- (v2);
  \node[edgelabel] at (0.160,1.100) {1};
  \draw[graphedge] (v0) -- (v3);
  \node[edgelabel] at (1.595,1.229) {2};
  \draw[graphedge] (v1) -- (v2);
  \node[edgelabel] at (-1.500,0.160) {3};
  \draw[graphedge] (v2) -- (v3);
  \node[edgelabel] at (1.500,0.160) {4};
  \draw[graphedge] (v2) -- (v4);
  \node[edgelabel] at (0.160,-1.100) {5};
  \draw[graphedge] (v4) -- (v1);
  \node[edgelabel] at (-1.595,-1.229) {6};
  \draw[graphedge] (v1) -- (v5);
  \node[edgelabel] at (-2.840,-1.100) {7};
  \draw[graphedge] (v4) -- (v5);
  \node[edgelabel] at (-1.500,-2.360) {8};
\end{tikzpicture}}
\caption*{After $s_1=\LeftChoice$}
\end{minipage}\hfill
\begin{minipage}{0.19\linewidth}
\centering
\resizebox{\linewidth}{!}{\begin{tikzpicture}[x=0.8cm,y=0.8cm,baseline=(current bounding box.center)]
  \tikzset{graphvertex/.style={circle,fill=black,draw=black,inner sep=0pt,minimum size=2.2pt}}
  \tikzset{graphedge/.style={draw=black,line width=0.4pt}}
  \tikzset{edgelabel/.style={font=\scriptsize,inner sep=1pt,fill=white,text=black,execute at begin node=\BeginAccSupp{ActualText={}},execute at end node=\EndAccSupp{}}}
  \node[graphvertex] (v0) at (0.000,2.200) {};
  \node[graphvertex] (v1) at (-3.000,0.000) {};
  \node[graphvertex] (v2) at (0.000,0.000) {};
  \node[graphvertex] (v3) at (3.000,0.000) {};
  \node[graphvertex] (v4) at (0.000,-2.200) {};
  \node[graphvertex] (v5) at (-3.000,-2.200) {};
  \node[graphvertex] (v6) at (0.000,-4.400) {};
  \node[graphvertex] (v7) at (-3.000,-4.400) {};
  \draw[graphedge] (v0) -- (v1);
  \node[edgelabel] at (-1.405,0.971) {0};
  \draw[graphedge] (v0) -- (v2);
  \node[edgelabel] at (0.160,1.100) {1};
  \draw[graphedge] (v0) -- (v3);
  \node[edgelabel] at (1.595,1.229) {2};
  \draw[graphedge] (v1) -- (v2);
  \node[edgelabel] at (-1.500,0.160) {3};
  \draw[graphedge] (v2) -- (v3);
  \node[edgelabel] at (1.500,0.160) {4};
  \draw[graphedge] (v2) -- (v4);
  \node[edgelabel] at (0.160,-1.100) {5};
  \draw[graphedge] (v4) -- (v1);
  \node[edgelabel] at (-1.595,-1.229) {6};
  \draw[graphedge] (v1) -- (v5);
  \node[edgelabel] at (-2.840,-1.100) {7};
  \draw[graphedge] (v4) -- (v5);
  \node[edgelabel] at (-1.500,-2.360) {8};
  \draw[graphedge] (v4) -- (v6);
  \node[edgelabel] at (0.160,-3.300) {9};
  \draw[graphedge] (v6) -- (v5);
  \node[edgelabel] at (-1.595,-3.429) {10};
  \draw[graphedge] (v5) -- (v7);
  \node[edgelabel] at (-2.840,-3.300) {11};
  \draw[graphedge] (v6) -- (v7);
  \node[edgelabel] at (-1.500,-4.560) {12};
\end{tikzpicture}}
\caption*{After $s_2=\LeftChoice$}
\end{minipage}\hfill
\begin{minipage}{0.19\linewidth}
\centering
\resizebox{\linewidth}{!}{\begin{tikzpicture}[x=0.8cm,y=0.8cm,baseline=(current bounding box.center)]
  \tikzset{graphvertex/.style={circle,fill=black,draw=black,inner sep=0pt,minimum size=2.2pt}}
  \tikzset{graphedge/.style={draw=black,line width=0.4pt}}
  \tikzset{edgelabel/.style={font=\scriptsize,inner sep=1pt,fill=white,text=black,execute at begin node=\BeginAccSupp{ActualText={}},execute at end node=\EndAccSupp{}}}
  \node[graphvertex] (v0) at (0.000,2.200) {};
  \node[graphvertex] (v1) at (-3.000,0.000) {};
  \node[graphvertex] (v2) at (0.000,0.000) {};
  \node[graphvertex] (v3) at (3.000,0.000) {};
  \node[graphvertex] (v4) at (0.000,-2.200) {};
  \node[graphvertex] (v5) at (-3.000,-2.200) {};
  \node[graphvertex] (v6) at (0.000,-4.400) {};
  \node[graphvertex] (v7) at (-3.000,-4.400) {};
  \node[graphvertex] (v8) at (0.000,-6.600) {};
  \node[graphvertex] (v9) at (3.000,-6.600) {};
  \draw[graphedge] (v0) -- (v1);
  \node[edgelabel] at (-1.405,0.971) {0};
  \draw[graphedge] (v0) -- (v2);
  \node[edgelabel] at (0.160,1.100) {1};
  \draw[graphedge] (v0) -- (v3);
  \node[edgelabel] at (1.595,1.229) {2};
  \draw[graphedge] (v1) -- (v2);
  \node[edgelabel] at (-1.500,0.160) {3};
  \draw[graphedge] (v2) -- (v3);
  \node[edgelabel] at (1.500,0.160) {4};
  \draw[graphedge] (v2) -- (v4);
  \node[edgelabel] at (0.160,-1.100) {5};
  \draw[graphedge] (v4) -- (v1);
  \node[edgelabel] at (-1.595,-1.229) {6};
  \draw[graphedge] (v1) -- (v5);
  \node[edgelabel] at (-2.840,-1.100) {7};
  \draw[graphedge] (v4) -- (v5);
  \node[edgelabel] at (-1.500,-2.360) {8};
  \draw[graphedge] (v4) -- (v6);
  \node[edgelabel] at (0.160,-3.300) {9};
  \draw[graphedge] (v6) -- (v5);
  \node[edgelabel] at (-1.595,-3.429) {10};
  \draw[graphedge] (v5) -- (v7);
  \node[edgelabel] at (-2.840,-3.300) {11};
  \draw[graphedge] (v6) -- (v7);
  \node[edgelabel] at (-1.500,-4.560) {12};
  \draw[graphedge] (v6) -- (v8);
  \node[edgelabel] at (0.160,-5.500) {13};
  \draw[graphedge] (v8) -- (v3);
  \node[edgelabel] at (1.354,-3.234) {14};
  \draw[graphedge] (v3) -- (v9);
  \node[edgelabel] at (3.160,-3.300) {15};
  \draw[graphedge] (v8) -- (v9);
  \node[edgelabel] at (1.500,-6.440) {16};
\end{tikzpicture}}
\caption*{After $s_3=\RightChoice$}
\end{minipage}\hfill
\begin{minipage}{0.19\linewidth}
\centering
\resizebox{\linewidth}{!}{\begin{tikzpicture}[x=0.8cm,y=0.8cm,baseline=(current bounding box.center)]
  \tikzset{graphvertex/.style={circle,fill=black,draw=black,inner sep=0pt,minimum size=2.2pt}}
  \tikzset{graphedge/.style={draw=black,line width=0.4pt}}
  \tikzset{edgelabel/.style={font=\scriptsize,inner sep=1pt,fill=white,text=black,execute at begin node=\BeginAccSupp{ActualText={}},execute at end node=\EndAccSupp{}}}
  \node[graphvertex] (v0) at (0.000,2.200) {};
  \node[graphvertex] (v1) at (-3.000,0.000) {};
  \node[graphvertex] (v2) at (0.000,0.000) {};
  \node[graphvertex] (v3) at (3.000,0.000) {};
  \node[graphvertex] (v4) at (0.000,-2.200) {};
  \node[graphvertex] (v5) at (-3.000,-2.200) {};
  \node[graphvertex] (v6) at (0.000,-4.400) {};
  \node[graphvertex] (v7) at (-3.000,-4.400) {};
  \node[graphvertex] (v8) at (0.000,-6.600) {};
  \node[graphvertex] (v9) at (3.000,-6.600) {};
  \node[graphvertex] (v10) at (-3.180,-5.811) {};
  \node[graphvertex] (v11) at (-2.730,-7.160) {};
  \node[graphvertex] (v12) at (-1.740,-8.180) {};
  \node[graphvertex] (v13) at (-0.405,-8.670) {};
  \node[graphvertex] (v14) at (1.010,-8.531) {};
  \node[graphvertex] (v15) at (2.226,-7.793) {};
  \draw[graphedge] (v0) -- (v1);
  \node[edgelabel] at (-1.405,0.971) {0};
  \draw[graphedge] (v0) -- (v2);
  \node[edgelabel] at (0.160,1.100) {1};
  \draw[graphedge] (v0) -- (v3);
  \node[edgelabel] at (1.595,1.229) {2};
  \draw[graphedge] (v1) -- (v2);
  \node[edgelabel] at (-1.500,0.160) {3};
  \draw[graphedge] (v2) -- (v3);
  \node[edgelabel] at (1.500,0.160) {4};
  \draw[graphedge] (v2) -- (v4);
  \node[edgelabel] at (0.160,-1.100) {5};
  \draw[graphedge] (v4) -- (v1);
  \node[edgelabel] at (-1.595,-1.229) {6};
  \draw[graphedge] (v1) -- (v5);
  \node[edgelabel] at (-2.840,-1.100) {7};
  \draw[graphedge] (v4) -- (v5);
  \node[edgelabel] at (-1.500,-2.360) {8};
  \draw[graphedge] (v4) -- (v6);
  \node[edgelabel] at (0.160,-3.300) {9};
  \draw[graphedge] (v6) -- (v5);
  \node[edgelabel] at (-1.595,-3.429) {10};
  \draw[graphedge] (v5) -- (v7);
  \node[edgelabel] at (-2.840,-3.300) {11};
  \draw[graphedge] (v6) -- (v7);
  \node[edgelabel] at (-1.500,-4.560) {12};
  \draw[graphedge] (v6) -- (v8);
  \node[edgelabel] at (0.160,-5.500) {13};
  \draw[graphedge] (v8) -- (v3);
  \node[edgelabel] at (1.354,-3.234) {14};
  \draw[graphedge] (v3) -- (v9);
  \node[edgelabel] at (3.160,-3.300) {15};
  \draw[graphedge] (v8) -- (v9);
  \node[edgelabel] at (1.500,-6.440) {16};
  \draw[graphedge] (v7) -- (v10);
  \node[edgelabel] at (-2.931,-5.126) {17};
  \draw[graphedge] (v8) -- (v10);
  \node[edgelabel] at (-1.629,-6.361) {18};
  \draw[graphedge] (v10) -- (v11);
  \node[edgelabel] at (-2.804,-6.435) {19};
  \draw[graphedge] (v8) -- (v11);
  \node[edgelabel] at (-1.333,-7.037) {20};
  \draw[graphedge] (v11) -- (v12);
  \node[edgelabel] at (-2.120,-7.558) {21};
  \draw[graphedge] (v8) -- (v12);
  \node[edgelabel] at (-0.762,-7.508) {22};
  \draw[graphedge] (v12) -- (v13);
  \node[edgelabel] at (-1.017,-8.275) {23};
  \draw[graphedge] (v8) -- (v13);
  \node[edgelabel] at (-0.045,-7.665) {24};
  \draw[graphedge] (v13) -- (v14);
  \node[edgelabel] at (0.287,-8.441) {25};
  \draw[graphedge] (v8) -- (v14);
  \node[edgelabel] at (0.647,-7.491) {26};
  \draw[graphedge] (v14) -- (v15);
  \node[edgelabel] at (1.535,-8.025) {27};
  \draw[graphedge] (v8) -- (v15);
  \node[edgelabel] at (1.188,-7.055) {28};
  \draw[graphedge] (v15) -- (v9);
  \node[edgelabel] at (2.479,-7.109) {29};
\end{tikzpicture}}
\caption*{Close the arc}
\end{minipage}
\caption{Step-by-step construction of $V_{15}(\LeftChoice,\LeftChoice,\RightChoice)$.}
\label{fig:V15-construction}
\end{figure}

We now record some simple structural properties of the graphs constructed above.

\begin{proposition}\label{prop:VN-properties}
Let $S^{\mathrm{op}}$ be the binary sequence obtained from $S$ by interchanging $\LeftChoice$ and $\RightChoice$ in every entry.
Then the following statements hold.
\begin{enumerate}[itemsep=0.35em, topsep=0.35em]
\item \label{it:lr_iso} The graphs $V_N(S)$ and $V_N(S^{\mathrm{op}})$ are isomorphic. 
Likewise, $U_N(S)$ and $U_N(S^{\mathrm{op}})$ are isomorphic. 
If $N$ is odd, then the isomorphism has an even edge permutation.

\item \label{it:V_is_wheel} The graph $V_N(\emptyset)$ is the wheel graph $W_N$.

\item \label{it:V_low_valence} For a binary sequence $S$ of length $(N-3)/2$, every vertex of $V_N(S)$ has valence at most $4$.
\end{enumerate}
\end{proposition}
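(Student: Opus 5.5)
The proof is a direct inspection of the construction, treating the three items separately. In each case we track how the steps act on the distinguished vertices $L$, $C$, $R$ and on the edge labels.

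\emph{Item \ref{it:lr_iso}.} Define a bijection $\phi$ on vertices as follows. It swaps the seed vertices $1$ and $3$ and fixes $0$ and $2$. At every iteration step it sends $\texttt{new\_C}$ to $\texttt{new\_C}$ and $\texttt{new\_L}$ to $\texttt{new\_R}$ (and conversely), and it fixes the extra centre vertex of $U_N$. On the arc it reverses the order, $n+i \mapsto n+m+1-i$. We check step by step that $\phi$ maps the edges of the graph built from $S$ onto the edges of the graph built from $S^{\mathrm{op}}$.
\begin{itemize}
\item Seed: $\phi$ fixes the labels $1$ and swaps labels $0\leftrightarrow 2$ and $3\leftrightarrow 4$.
\item Iteration steps: each label $4i+1,\dots,4i+4$ is sent to the same label, because the left and right choices produce edges with identical labels.
\item Arc with $m>0$: the path edges, with labels $e, e+2, \dots, e+2(m-1)$ and the final label, are reversed. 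The spoke labels $e+1, e+3, \dots, e+2m-1$ are also reversed.
\item Arc with $m=0$: the closing edge $(L,R)$ is fixed.
\end{itemize}
For the sign, count transpositions in the induced edge permutation. The seed contributes $2$. Reversing the $m+1$ path edges contributes $\lfloor (m+1)/2\rfloor$, and reversing the $m$ spokes contributes $\lfloor m/2\rfloor$. Together these give $m$, so the total is $2+m$. Hence the permutation is even exactly when $m$ is even. For $V_N(S)$ with $N$ odd we have $m=N-2k-3$, which is even. For $U_N(S)$ we would verify the analogous parity from the actual number of arc vertices. This parity bookkeeping, in particular for $U_N$ where the count of remaining vertices must be checked carefully, is the only delicate point of the proposition.

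\emph{Item \ref{it:V_is_wheel}.} For $S=\emptyset$ we have $k=0$ and $m=N-3$. Vertex $C=2$ is adjacent to $0$, $1$ and $3$ from the seed, and to all $m$ arc vertices through the spokes, so its valence is $N$. The remaining vertices form the cycle
\[
0,\ 1\,(=L),\ n+1,\ \dots,\ n+m,\ 3\,(=R),\ 0,
\]
which has length $N$. Each of these vertices is joined to $C$. So the graph is the hub-and-rim graph $W_N$.

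\emph{Item \ref{it:V_low_valence}.} For $k=(N-3)/2$ we get $m=0$, so the construction ends with the single edge $(L,R)$. We prove by induction on the iteration steps the following invariant: the current $C$ has valence $3$, the current $L$ and $R$ have valence $2$, and every other vertex has valence at most $4$. In the seed, $C=2$ has neighbours $0,1,3$, while $L$ and $R$ have valence $2$, and $0$ has valence $3$. In a step with choice $\RightChoice$:
\begin{itemize}
\item the old $C$ gains one edge and reaches valence $4$;
\item the old $R$ gains two edges, to $\texttt{new\_C}$ and $\texttt{new\_R}$, and reaches valence $4$;
\item $\texttt{new\_C}$ gets valence $3$;
\item $\texttt{new\_R}$ gets valence $2$;
\item $L$ is untouched.
\end{itemize}
The $\LeftChoice$ case is symmetric, so the invariant is preserved. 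The final edge $(L,R)$ raises $L$ and $R$ to valence $3$. Therefore every vertex of $V_N(S)$ has valence $3$ or $4$.
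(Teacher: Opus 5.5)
Your route is the same as the paper's. You swap the two sides via $\phi$, keep the iteration labels fixed, and reverse the $2m+1$ arc edges, which gives sign $(-1)^m$. Your explicit seed permutation $(0\,2)(3\,4)$ is more accurate than the paper's loose ``reversal'' wording for the seed, and has the same even parity. Items~2 and~3 are direct inspection. Your inductive valence invariant is a more careful version of the paper's remark that only the final $C$ can exceed valence $4$.

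The gap is that you never establish evenness for $U_N(S)$. You leave it as ``we would verify'' and call it the delicate point. This case cannot be skipped: the main theorem uses it to rewrite $2\sum_{S\in\mathcal{S}_k^{\LeftChoice}}U_{2m+1}(S)$ as $\sum_{S\in\mathcal{S}_k}U_{2m+1}(S)$. If the extra centre vertex of $U_N$ reduced the number of arc vertices to $N-2k-4$, your own count $2+m$ would make the permutation odd and the claim false.

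It does not. The arc step sets $m=N-2k-3$ for both families; $U_N(S)$ has $N+2$ vertices and $2N+1$ edges, so the extra vertex is already accounted for. You can confirm this from the labels. The arc of $U_N(S)$ uses labels $4k+6,\dots,2N$, which is $2N-4k-5=2m+1$ edges. This is the same number as the arc $4k+5,\dots,2N-1$ of $V_N(S)$. Hence $m$ is even for odd $N$. Your $\phi$ fixes the extra centre vertex and the extra edge $4k+5$, so the total count $2+m$ is even in the $U_N$ case as well. This is exactly how the paper treats both families at once: it notes that there are $2N-4k-5\equiv 1 \pmod 4$ arc edges, so their reversal is even.
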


\begin{proof}

For item~\ref{it:lr_iso}, the isomorphism is given by reversing the order in which we add the edges in Steps~\ref{it:seed} and \ref{it:arc}. Keep in mind that a permutation reversing the order of $n$ items is even if $n \equiv 0,1 \pmod{4}$ and odd otherwise.
In Step~\ref{it:seed}, we add $5$ edges, which reverses with two vertex transpositions.
In Step~\ref{it:arc}, we add $2N-4k-5$ edges. For $N=2m+1$, we have
\[
2N-4k-5 = 4m-4k-3 \equiv 1 \pmod{4},
\]
so this reversal is even.
 
For $V_N(\emptyset)$, we skip from Step \ref{it:seed} directly to Step \ref{it:arc}. This results in a wheel graph where vertex $2$ is the centre hub vertex. 

Finally, we note that for any $V_N(S)$, the only vertex which may be more than $4$-valent is the vertex denoted C in Step \ref{it:arc}.
For a sequence of length $(N-3)/2$, we do not have any extra edges to attach to this vertex.

\end{proof}

\subsection*{Examples of $V_N$ and $U_N$}

We now illustrate some examples of graphs in the families $V_N(S)$ and $U_N(S)$ in Figure~\ref{fig:VN-UN-examples}.

\begin{figure}[H]
\centering
\begin{minipage}{0.48\linewidth}
\centering
\resizebox{0.50\linewidth}{!}{
}
\caption*{$V_9(\emptyset)$}
\end{minipage}\hfill
\begin{minipage}{0.48\linewidth}
\centering
\resizebox{0.50\linewidth}{!}{%
}
\caption*{$U_9(\emptyset)$}
\end{minipage}
\end{figure}

\begin{figure}[H]
\centering
\captionsetup{font=scriptsize}
\vspace{-0.8em}
\begin{minipage}{0.32\linewidth}
\centering
\resizebox{0.60\linewidth}{!}{%
}
\caption*{$V_9(\LeftChoice,\RightChoice,\LeftChoice)$}
\end{minipage}\hfill
\begin{minipage}{0.32\linewidth}
\centering
\resizebox{0.60\linewidth}{!}{%
}
\caption*{$V_9(\LeftChoice,\LeftChoice,\RightChoice)$}
\end{minipage}\hfill
\begin{minipage}{0.32\linewidth}
\centering
\resizebox{0.60\linewidth}{!}{%
}
\caption*{$V_9(\LeftChoice,\LeftChoice,\LeftChoice)$}
\end{minipage}
\vspace{-0.35em}

\par\noindent
\begin{minipage}{0.32\linewidth}
\centering
\resizebox{0.60\linewidth}{!}{%
}
\caption*{$V_{13}(\LeftChoice,\RightChoice)$}
\end{minipage}\hfill
\begin{minipage}{0.32\linewidth}
\centering
\resizebox{0.60\linewidth}{!}{%
}
\caption*{$U_{13}(\LeftChoice,\RightChoice)$}
\end{minipage}\hfill
\begin{minipage}{0.32\linewidth}
\centering
\resizebox{0.60\linewidth}{!}{%
}
\caption*{$V_{13}(\LeftChoice,\RightChoice,\LeftChoice)$}
\end{minipage}
\caption{Additional examples in the families $V_N(S)$ and $U_N(S)$.}
\label{fig:VN-UN-examples}
\end{figure}

The families $V_N(S)$ and $U_N(S)$ constructed in this way will be used to produce explicit low-valence representatives for the wheel classes.

\section[Differential of the family U\_N(S)]{Differential of \texorpdfstring{$U_N(S)$}{U_N(S)}}

Let us now consider the edge contraction differential on $U_N(S)$. 

The aim of this section is to prove the following lemma, which forms the core combinatorial step in the proof of the main theorem.
\begin{lemma}\label{lem:differential-UN}
Let $\mathcal{S}_k$ denote the set of all binary sequences of choices $\{\LeftChoice, \RightChoice\}$ of length $k$.
Then, for $N$ odd and $k\le(N-5)/2$, we have
\[
 d \left(\sum_{S \in \mathcal{S}_k} U_N(S) \right)
 = \sum_{T \in \mathcal{S}_{k+1}} V_N(T) - \sum_{S \in \mathcal{S}_k} V_N(S).
\]
\end{lemma}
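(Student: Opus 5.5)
I would prove the lemma by computing $d U_N(S)$ for a single sequence $S$ of length $k$, edge by edge, and sorting the contracted graphs into three groups. Contracting the extra spine edge $(C,\texttt{new\_C})$ with label $4k+5$ (Step~\ref{it:U}) should give exactly $-V_N(S)$. Contracting one of the two outer edges of the first arc triangle should give one of the two extensions $V_N(S,\LeftChoice)$ or $V_N(S,\RightChoice)$. Every remaining contraction should either vanish or cancel against a term from another graph, possibly coming from $U_N(S')$ with a different $S'$. Summing over $\mathcal{S}_k$ would then give the stated identity, since each $T\in\mathcal{S}_{k+1}$ arises uniquely as $(S,s)$ with $s\in\{\LeftChoice,\RightChoice\}$.

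\textbf{Step 1: the spine edge.} Contracting the edge labelled $4k+5$ merges the old centre $C$ with $\texttt{new\_C}$. The arc edges to $C$ then reattach to the old centre, so the result is literally $V_N(S)$. The edge count is $2N$ (the last label is $2N$), and contracting label $i=4k+5$ gives sign $(-1)^{4k+5+2N}=-1$. Relabelling decreases all arc labels by one, which matches the labelling of $V_N(S)$ exactly. This produces $-V_N(S)$.

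\textbf{Step 2: the arc endpoints.} Since $k\le (N-5)/2$, we have $m=N-2k-3\ge 2$, so the arc has at least two vertices.
\begin{itemize}
\item Contracting the edge $(L,n+1)$ with label $e=4k+6$ should produce $V_N(S,\LeftChoice)$. After contraction, $L$ is adjacent to the new centre via edge $e+1$; it is also joined to the old centre $C$ by the spine, and the arc vertex $n+2$ is attached next. Reading off this structure gives the triangle pattern of Steps~\ref{it:spine}--\ref{it:triangle3} with a left choice, the old $L$ playing the role of the triangle vertex and vertex $n+1$ merged into it.
\item Contracting the final edge $(n+m,R)$ with label $2N$ should give $V_N(S,\RightChoice)$ symmetrically, using Proposition~\ref{prop:VN-properties}\ref{it:lr_iso} to match orientations.
\end{itemize}
I would check the signs carefully. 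The sign $(-1)^{e+2N}=+1$ comes with a relabelling, and the contracted graph's edges must be permuted into the canonical order of $V_N(S,\cdot)$. I expect this permutation to be a short cyclic shift of a block of about four labels, so it should be even.

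\textbf{Step 3: the cancellations (main obstacle).} All other contractions must disappear.
\begin{itemize}
\item Contracting an edge of a triangle in the seed, or in the body built in Step~\ref{it:iterate}, yields a graph with a double edge. Such a graph is zero for $d$ even, or the terms vanish because the graph has an odd symmetry.
\item Contracting an interior arc edge $(n+i,n+i+1)$ creates a double edge to $C$, which vanishes.
\item Contracting a spoke $(n+i,C)$ produces graphs that should cancel in adjacent pairs. Alternatively, they form a graph with an odd automorphism; the arc is the kind of triangle strip in which this is standard, as in the computation of $dW_N$.
\end{itemize}
The delicate cases are the spine edges $4i+1$ for $i\le k$ and the triangle edges $4i+2,4i+3,4i+4$. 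These contractions need not produce double edges, and I expect their contributions to cancel between $U_N(S)$ and $U_N(S')$, where $S'$ differs from $S$ by flipping or swapping adjacent choices. This is where summing over all of $\mathcal{S}_k$ matters. I would first do $k=0,1$ by hand (e.g.\ $U_9$) to identify the pairing, then prove it in general by exhibiting explicit isomorphisms together with the parity of the induced edge permutation.
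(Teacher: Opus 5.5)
Your Steps~1 and~2 reproduce the paper's first three contraction terms. Using Proposition~\ref{prop:VN-properties}\ref{it:lr_iso} for the label-$2N$ term is a legitimate shortcut: the even reflection $U_N(S)\cong U_N(S^{\mathrm{op}})$ reverses the arc block and sends label $2N$ to $4k+6$, so that term equals $V_N(S^{\mathrm{op}},\LeftChoice)\cong V_N(S,\RightChoice)$. Two small corrections here:
\begin{itemize}
\item The label-$4k+6$ contraction needs no permutation at all. It is literally $V_N(S,\LeftChoice)$, with arc vertex $n+2$ becoming the new $L$.
\item A direct relabelling for label $2N$ would shift the whole arc block by three, not just a few labels.
\end{itemize}
There is also a slip: labels run $0,\dots,2N$, so $U_N(S)$ has $2N+1$ edges, not $2N$. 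Under the stated convention this flips all three signs together. The paper's computation has the same slip, and it is harmless, since only relative signs matter.

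The genuine gap is Step~3, which is the real content of the lemma. You only state an expectation there, and your inventory of surviving terms is wrong. Every edge other than $4k+5$, $4k+6$, $2N$ and the spine edges $4i+1$ lies in a triangle:
\begin{itemize}
\item the two seed triangles;
\item the triangle $(\texttt{new\_C},R,\texttt{new\_R})$ (or its mirror) formed by $4i+2,4i+3,4i+4$;
\item the arc triangles $(n+j,n+j+1,C)$, which contain every spoke because $m\ge 2$.
\end{itemize}
So the spokes do not cancel in pairs, and the edges $4i+2,4i+3,4i+4$ are not delicate: all of these vanish individually.

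Among the spine edges, label $5$ (when $k\ge1$) always lies in a triangle with the seed, because vertex $2$ is adjacent to both $1$ and $3$. For $i\ge2$, the edge $4i+1$ lies in a triangle exactly when $s_{i-1}=s_i$. The reason is that $C_{i-1}$ is adjacent to the side vertex it created, and $C_i$ attaches to that same vertex iff the choices agree.

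What is missing is the cancellation when $s_{i-1}\neq s_i$. Let $S'$ be $S$ with $s_{i-1}$ and $s_i$ transposed. Contracting $4i+1$ in $U_N(S)$ and in $U_N(S')$ gives isomorphic graphs: in both, the merged centre carries one left and one right triangle. The isomorphism only exchanges the label triples $\{4i-2,4i-1,4i\}$ and $\{4i+2,4i+3,4i+4\}$, i.e.\ three transpositions, which is odd. Since the contraction signs agree, these terms cancel in $\sum_{S\in\mathcal{S}_k}$, and this is exactly why the lemma needs the full sum.

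Your plan to find the pairing by working out $k=0,1$ would see nothing. Surviving spine contractions first occur at $k=2$, for example $U_9(\LeftChoice,\RightChoice)$ against $U_9(\RightChoice,\LeftChoice)$.
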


\begin{proof}
For each $U_N(S)$ there are only a handful of edges that may be contracted without creating a double edge, which would kill the resulting graph in the even graph complex.

The edges that may be contracted are the following.
\begin{enumerate}[itemsep=0.35em, topsep=0.35em]
\item The edge labelled $4k+5$, created in Step~\ref{it:U}. The resulting graph is identical to $V_N(S)$. The sign of this contraction is
\[
(-1)^{4k+5+2N} = -1.
\]

\item The edge labelled $4k+6$, created in Step~\ref{it:left_arc}. This creates a graph that is identical to $V_N(S,\LeftChoice)$. The sign of this contraction is $+1$.

\item The edge labelled $2N$, created in Step~\ref{it:right_arc}. This creates a graph that is isomorphic to $V_N(S,\RightChoice)$. The sign of this contraction is $+1$.

The isomorphism is given by swapping the edges labelled $2N-1$ and $2N-3$ and by the three insertions
\[
2N-3 \mapsto 4(k+1)+1,
\]
\[
2N-2 \mapsto 4(k+1)+2,
\]
\[
2N-1 \mapsto 4(k+1)+3.
\]
All these four permutations carry an odd sign, so the composed sign is even.

\item Finally, some edges $4i+1$, for $i=2,\dots,k$, created in Step~\ref{it:spine}, may be contracted without creating a double edge. Let us denote such a graph by $\overline{U}_N^i(S)$.
\end{enumerate}

Consider
\[
\overline{U}_N^i(s_1,\dots,s_{i-1},s_i,\dots,s_k)
\]
for a sequence $S=(s_1,\dots,s_k)$. If $s_i=s_{i-1}$, then $\overline{U}_N^i(S)$ vanishes because it contains a double edge.

Otherwise, the graphs
\[
\overline{U}_N^i(s_1,\dots,s_{i-2},\LeftChoice,\RightChoice,\dots,s_k)
\]
and
\[
\overline{U}_N^i(s_1,\dots,s_{i-2},\RightChoice,\LeftChoice,\dots,s_k)
\]
are isomorphic, where the isomorphism is given by the three edge transpositions coming from \ref{it:triangle1}, \ref{it:triangle2}, and \ref{it:triangle3} of Step~\ref{it:iterate} in the construction list above. Therefore,
\[
\overline{U}_N^i(s_1,\dots,s_{i-1},s_i,\dots,s_k)
=
-\overline{U}_N^i(s_1,\dots,s_{i-2},s_i,s_{i-1},s_{i+1},\dots,s_k).
\]
Hence these terms cancel pairwise in the sum over all sequences. It follows that
\[
\begin{aligned}
\sum_{S \in \mathcal{S}_k} d\bigl(U_N(S)\bigr)
&= \sum_{S \in \mathcal{S}_k} \Bigl( V_N(S,\LeftChoice) + V_N(S,\RightChoice) - V_N(S) \Bigr)\\
&= \sum_{T \in \mathcal{S}_{k+1}} V_N(T) - \sum_{S \in \mathcal{S}_k} V_N(S).
\end{aligned}
\]

\end{proof}

\section{Proof of the Main Theorem}
We now state the explicit form of the main theorem, in which $U_{2m+1}$ is defined.

\begin{theorem}\label{thm:main-reduction}\label{thm:explicit-representatives}
For $k\geq 1$, let $\mathcal{S}_k^{\LeftChoice}\subset \mathcal{S}_k$ denote the subset of binary sequences of length $k$ whose first entry is $\LeftChoice$.
For each $m\geq 2$, define
\[
U_{2m+1}^{(0)}:=U_{2m+1}(\emptyset),
\]
and
\[
U_{2m+1}^{(k)}:=2\sum_{S\in \mathcal{S}_k^{\LeftChoice}} U_{2m+1}(S),
\qquad k=1,\dots,m-2,
\]
and
\[
U_{2m+1}:=\sum_{k=0}^{m-2} U_{2m+1}^{(k)}
=
U_{2m+1}(\emptyset)+2\sum_{k=1}^{m-2}\sum_{S\in \mathcal{S}_k^{\LeftChoice}} U_{2m+1}(S).
\]
Then
\[
W_{2m+1}+d(U_{2m+1})=2\sum_{S\in \mathcal{S}_{m-1}^{\LeftChoice}}V_{2m+1}(S).
\]
\end{theorem}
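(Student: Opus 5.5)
The plan is to telescope Lemma~\ref{lem:differential-UN} over $k$ and then use the left--right symmetry of Proposition~\ref{prop:VN-properties}\ref{it:lr_iso} to halve the index sets. Set $N=2m+1$. The hypothesis $k\le (N-5)/2$ of the lemma becomes $k\le m-2$, so the lemma applies for every $k=0,1,\dots,m-2$. These are exactly the indices appearing in the definition of $U_{2m+1}$.

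\textbf{Step 1 (telescoping).} Summing the identity of Lemma~\ref{lem:differential-UN} over $k=0,\dots,m-2$, the intermediate terms $\sum_{S\in\mathcal{S}_k}V_N(S)$ for $1\le k\le m-2$ cancel. This leaves
\[
d\Bigl(\sum_{k=0}^{m-2}\sum_{S\in\mathcal{S}_k}U_N(S)\Bigr)=\sum_{T\in\mathcal{S}_{m-1}}V_N(T)-V_N(\emptyset).
\]
By Proposition~\ref{prop:VN-properties}\ref{it:V_is_wheel}, $V_N(\emptyset)=W_{2m+1}$. Here one should check that the edge labelling produced by the construction is the one used to orient $W_{2m+1}$, i.e.\ that the equality holds with sign $+1$. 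Since the statement only fixes $W_{2m+1}$ up to this convention, I would take the labelling of $V_N(\emptyset)$ as the defining orientation. Moving $V_N(\emptyset)$ to the left gives
\[
W_{2m+1}+d\Bigl(\sum_{k=0}^{m-2}\sum_{S\in\mathcal{S}_k}U_N(S)\Bigr)=\sum_{T\in\mathcal{S}_{m-1}}V_N(T).
\]

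\textbf{Step 2 (symmetrization).} For $k\ge1$, the involution $S\mapsto S^{\mathrm{op}}$ is a bijection from $\mathcal{S}_k^{\LeftChoice}$ onto the complementary set of sequences starting with $\RightChoice$, so these two sets partition $\mathcal{S}_k$. By Proposition~\ref{prop:VN-properties}\ref{it:lr_iso}, with $N$ odd, $U_N(S)\cong U_N(S^{\mathrm{op}})$ and $V_N(S)\cong V_N(S^{\mathrm{op}})$ via isomorphisms inducing even edge permutations. Hence these are equal as elements of $\GC_d$ for $d$ even, where only the edge-order sign matters. It follows that
\[
\sum_{S\in\mathcal{S}_k}U_N(S)=2\sum_{S\in\mathcal{S}_k^{\LeftChoice}}U_N(S)=U^{(k)}_{2m+1},
\]
and similarly for the $V$'s with $k=m-1\ge1$, which uses $m\ge2$. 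For $k=0$ the sum is just $U_N(\emptyset)=U^{(0)}_{2m+1}$. Substituting into the display of Step 1 gives exactly $W_{2m+1}+d(U_{2m+1})=2\sum_{S\in\mathcal{S}^{\LeftChoice}_{m-1}}V_{2m+1}(S)$.

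\textbf{Main obstacle.} The only delicate point is signs. I must confirm that the reflection isomorphism of Proposition~\ref{prop:VN-properties}\ref{it:lr_iso} is orientation preserving for the $U$ graphs as well as the $V$ graphs. The $U$ graphs have one extra edge, labelled $4k+5$, which lies on the spine and is fixed by the reflection. Their arc has $2N-4k-6$ edges rather than $2N-4k-5$; reversing the order of these arc edges fixes the final label, since $2N$ is the reversed image of the label $4k+6$. I would therefore recheck the parity count for the reversed edge blocks in the $U$ case, and that the spine edges $4i+1$ and the triangle edges of Step~\ref{it:iterate} are mapped without reordering. If that parity check failed, the factor $2$ would become $0$ for odd $k$, so this is the step I would verify by hand first, on the example $U_{9}(\LeftChoice)$.
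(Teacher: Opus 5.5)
Your proposal is correct and follows the paper's proof: you telescope Lemma~\ref{lem:differential-UN} over $k=0,\dots,m-2$, identify $V_{2m+1}(\emptyset)$ with $W_{2m+1}$, and use the even reflection isomorphisms of item~\ref{it:lr_iso} of Proposition~\ref{prop:VN-properties} to rewrite each sum over $\mathcal{S}_k$ as $2\sum_{\mathcal{S}_k^{\LeftChoice}}$. The parity check you flag does go through, though your count is off by one: the arc of $U_N(S)$ consists of the edges labelled $4k+6,\dots,2N$, which is $2N-4k-5\equiv 1 \pmod{4}$ edges, exactly as for $V_N(S)$; the seed contributes the even permutation $(0\,2)(3\,4)$, and the iteration edges and the extra edge $4k+5$ keep their labels, so the isomorphism is even.
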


By Proposition~\ref{prop:VN-properties}, item~\ref{it:V_low_valence}, every graph on the right-hand side has only $3$- and $4$-valent vertices.

\begin{proof}
By Proposition~\ref{prop:VN-properties}, item~\ref{it:lr_iso},
\[
U_{2m+1}=\sum_{k=0}^{m-2}\sum_{S\in \mathcal{S}_k}U_{2m+1}(S).
\]
By Lemma~\ref{lem:differential-UN},
\[
d(U_{2m+1})
=
\sum_{k=0}^{m-2}
\left(
\sum_{T\in \mathcal{S}_{k+1}}V_{2m+1}(T)-\sum_{S\in \mathcal{S}_k}V_{2m+1}(S)
\right)
=
\sum_{S\in \mathcal{S}_{m-1}}V_{2m+1}(S)-V_{2m+1}(\emptyset).
\]
By Proposition~\ref{prop:VN-properties}, item~\ref{it:V_is_wheel}, the graph $V_{2m+1}(\emptyset)$ is the wheel graph $W_{2m+1}$. Hence
\[
W_{2m+1}+d(U_{2m+1})=\sum_{S\in \mathcal{S}_{m-1}}V_{2m+1}(S).
\]
Again by Proposition~\ref{prop:VN-properties}, item~\ref{it:lr_iso},
\[
\sum_{S\in \mathcal{S}_{m-1}}V_{2m+1}(S)
=
2\sum_{S\in \mathcal{S}_{m-1}^{\LeftChoice}}V_{2m+1}(S).
\]
This proves the theorem.
\end{proof}

\section{Examples}
Finally we present visualisations of the explicit $3$- and $4$-valent representatives of the homology classes $W_5$, $W_7$, $W_9$, and $W_{11}$ provided by Theorem~\ref{thm:explicit-representatives}:

\begin{figure}[H]
\centering
\begin{minipage}{0.18\linewidth}
\centering
$W_5 \simeq$
\end{minipage}
\begin{minipage}{0.24\linewidth}
\centering
\resizebox{0.9\linewidth}{!}{
}
\caption*{$V_5(\LeftChoice)$}
\end{minipage}
\end{figure}

\begin{figure}[H]
\centering
\begin{minipage}{0.16\linewidth}
\centering
$W_7 \simeq$
\end{minipage}
\begin{minipage}{0.22\linewidth}
\centering
\resizebox{0.9\linewidth}{!}{%
}
\caption*{$V_7(LL)$}
\end{minipage}
\begin{minipage}{0.035\linewidth}
\centering
$+$
\end{minipage}
\begin{minipage}{0.22\linewidth}
\centering
\resizebox{0.9\linewidth}{!}{%
}
\caption*{$V_7(LR)$}
\end{minipage}
\end{figure}

\vspace{-1.0em}

\begin{figure}[H]
\centering
\begin{minipage}{0.12\linewidth}
\centering
$W_9 \simeq$
\end{minipage}
\begin{minipage}{0.18\linewidth}
\centering
\resizebox{0.9\linewidth}{!}{%
}
\caption*{$V_9(LLL)$}
\end{minipage}
\begin{minipage}{0.02\linewidth}
\centering
$+$
\end{minipage}
\begin{minipage}{0.18\linewidth}
\centering
\resizebox{0.9\linewidth}{!}{%
}
\caption*{$V_9(LLR)$}
\end{minipage}
\begin{minipage}{0.02\linewidth}
\centering
$+$
\end{minipage}
\begin{minipage}{0.18\linewidth}
\centering
\resizebox{0.9\linewidth}{!}{%
}
\caption*{$V_9(LRL)$}
\end{minipage}
\begin{minipage}{0.02\linewidth}
\centering
$+$
\end{minipage}
\begin{minipage}{0.18\linewidth}
\centering
\resizebox{0.9\linewidth}{!}{%
}
\caption*{$V_9(LRR)$}
\end{minipage}
\end{figure}

\vspace{-1.1em}

\begin{figure}[H]
\centering
\begin{minipage}{0.12\linewidth}
\centering
$W_{11} \simeq$
\end{minipage}
\begin{minipage}{0.18\linewidth}
\centering
\resizebox{0.9\linewidth}{!}{%
}
\caption*{$V_{11}(LLLL)$}
\end{minipage}
\begin{minipage}{0.02\linewidth}
\centering
$+$
\end{minipage}
\begin{minipage}{0.18\linewidth}
\centering
\resizebox{0.9\linewidth}{!}{%
}
\caption*{$V_{11}(LLLR)$}
\end{minipage}
\begin{minipage}{0.02\linewidth}
\centering
$+$
\end{minipage}
\begin{minipage}{0.18\linewidth}
\centering
\resizebox{0.9\linewidth}{!}{%
}
\caption*{$V_{11}(LLRL)$}
\end{minipage}
\begin{minipage}{0.02\linewidth}
\centering
$+$
\end{minipage}
\begin{minipage}{0.18\linewidth}
\centering
\resizebox{0.9\linewidth}{!}{%
}
\caption*{$V_{11}(LLRR)$}
\end{minipage}

\vspace{-0.1em}

\begin{minipage}{0.12\linewidth}
\centering
$+$
\end{minipage}
\begin{minipage}{0.18\linewidth}
\centering
\resizebox{0.9\linewidth}{!}{%
}
\caption*{$V_{11}(LRLL)$}
\end{minipage}
\begin{minipage}{0.02\linewidth}
\centering
$+$
\end{minipage}
\begin{minipage}{0.18\linewidth}
\centering
\resizebox{0.9\linewidth}{!}{%
}
\caption*{$V_{11}(LRLR)$}
\end{minipage}
\begin{minipage}{0.02\linewidth}
\centering
$+$
\end{minipage}
\begin{minipage}{0.18\linewidth}
\centering
\resizebox{0.9\linewidth}{!}{%
}
\caption*{$V_{11}(LRRL)$}
\end{minipage}
\begin{minipage}{0.02\linewidth}
\centering
$+$
\end{minipage}
\begin{minipage}{0.18\linewidth}
\centering
\resizebox{0.9\linewidth}{!}{%
}
\caption*{$V_{11}(LRRR)$}
\end{minipage}
\end{figure}

\end{document}